\documentclass[12pt,a4paper,reqno]{amsart}
\usepackage{amssymb,amsfonts}
\usepackage{amsthm}
\usepackage{amsmath}
\usepackage{amscd}
\usepackage[latin2]{inputenc}
\usepackage{t1enc}
\usepackage[mathscr]{eucal}
\usepackage{indentfirst}
\usepackage{graphicx}
\usepackage{graphics}
\usepackage{pict2e}
\usepackage{epic}
\numberwithin{equation}{section}
\usepackage[margin=2.9cm]{geometry}
\usepackage{epstopdf} 

\usepackage{hyperref}

\usepackage{todonotes}

\theoremstyle{plain}

\numberwithin{equation}{section}
\newtheorem{theorem}{Theorem}[section]

\newtheorem{lemma}[equation]{Lemma}
\newtheorem{proposition}[equation]{Proposition}

\newtheorem{conjecture}[theorem]{Conjecture}

{
\theoremstyle{definition}

\newtheorem{remark}[theorem]{Remark}
}

\newcommand{\KL}{Kazhdan-Lusztig\ }
\DeclareMathOperator{\ch}{ch}
\newcommand{\D}{\mathcal{D}}
\renewcommand{\P}{\mathcal{P}}
\DeclareMathOperator{\rk}{rk}


\newcommand{\OS}{OS}

\newcommand{\grVRep}{\operatorname{grVRep}}

\newcommand{\VRep}{\operatorname{VRep}}
\newcommand{\OSc}{H}

\DeclareMathOperator{\Ind}{Ind}

\numberwithin{equation}{section}
 
\begin{document}

\title{Kazhdan-Lusztig polynomials of thagomizer matroids}

\author{Katie R. Gedeon}

\address{Department of Mathematics \\ University of Oregon \\
Eugene, OR 97403} 

\email{kgedeon@uoregon.edu}




\begin{abstract} We introduce thagomizer matroids and compute the Kazhdan-Lusztig polynomial of a rank $n+1$ thagomizer matroid by showing that the coefficient of $t^k$ is equal to the number of Dyck paths of semilength $n$ with $k$ long ascents. We also give a conjecture for the $S_n$-equivariant Kazhdan-Lusztig polynomial of a thagomizer matroid.
\end{abstract}

\maketitle

\section{Introduction} 
The main objects of study in this paper are the \KL polynomials of a particular family of matroids. The \KL polynomial of a matroid was introduced by Elias, Proudfoot and Wakefield \cite{EPW}. In the appendix of that paper, the authors (along with Young) explicitly computed the coefficients of these polynomials for some uniform and braid matroids of small rank. Proudfoot, Wakefield and Young studied uniform matroids of rank $n-1$ on $n$ elements \cite{PWY} and gave a combinatorial description for the coefficients of the associated \KL polynomial.

Let $M_n$ be the matroid associated with the graph obtained from the bipartite graph $K_{2,n}$ by adding an edge between the two distinguished vertices. We call $M_n$ a \textbf{thagomizer matroid}\footnote{The underlying graph is also called the complete tripartite graph $K_{1,1,n}$ or the fan graph $F_{n,2}$.}. The ground set of $M_n$ has size $2n+1$ and the rank of $M_n$ is $n+1$. We give a description of the flats of $M_n$ in Section \ref{sec:non-ekl}.

\begin{figure}[h]
    \includegraphics[scale=1]{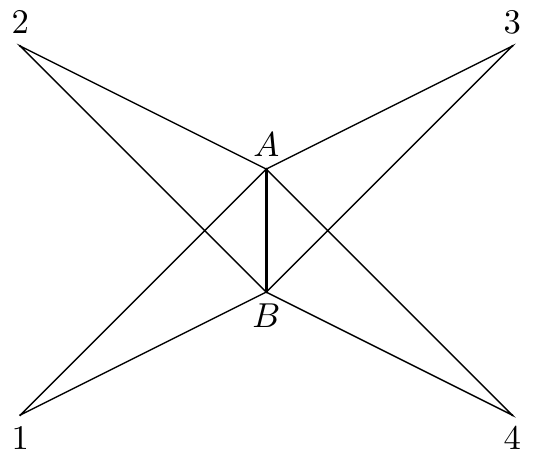}
    \caption{The underlying graph of $M_4$.}
    \label{fig:thag}
\end{figure}

Let $P_n(t)$ be the \textbf{\KL polynomial} of $M_n$ and set
\[
\Phi(t,u) := \sum_{n=0}^\infty P_{n}(t) u^{n+1}.
\]
Let $c_{n,k}$ be the $k$-th coefficient of $P_n(t)$ and note that the degree of $P_n(t)$ is $\lfloor \frac n2 \rfloor$. The following theorem is our main result.

\pagebreak
\begin{theorem}\label{coeffs} The following (equivalent) statements hold.
\begin{enumerate}
\item For all $n$ and $k$, $c_{n,k}$ is the number of Dyck paths of semilength $n$ with $k$ long ascents.
\item The generating function $\Phi(t,u)$ is equal to $ \dfrac{1-\sqrt{1-4u(1-u+tu)}}{2(1-u+tu)}$.
\end{enumerate}
\end{theorem}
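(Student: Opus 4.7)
The approach is to apply the defining recursion of Elias--Proudfoot--Wakefield \cite{EPW} for the \KL polynomial of a matroid,
\[
t^{\operatorname{rk} M} P_M(t^{-1}) = \sum_{F \in \mathcal{L}(M)} \chi_{M^F}(t)\, P_{M/F}(t),
\]
together with the degree bound $\deg P_M < \operatorname{rk}(M)/2$, specialized to $M = M_n$. Because $M_n$ is graphic, its flats correspond to vertex partitions of $\{u,v,w_1,\ldots,w_n\}$ with connected blocks; I would split them into two families: (A) flats in which $u,v$ lie in a common block $\{u,v\}\cup\{w_i:i\in T\}$ for some $T\subseteq[n]$, and (B) flats in which $u,v$ lie in separate blocks $\{u\}\cup\{w_i:i\in T_u\}$ and $\{v\}\cup\{w_i:i\in T_v\}$ with $T_u\cap T_v=\emptyset$.

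For each flat I would then identify the localization and contraction. A Type~A flat with $|T|=k$ localizes to the smaller thagomizer $M_k$ and contracts to a matroid whose simplification is $U_{n-k,n-k}$, hence has \KL polynomial $1$. A Type~B flat with $|T_u|+|T_v|=\ell$ localizes to the free matroid $U_{\ell,\ell}$ (with characteristic polynomial $(t-1)^\ell$) and contracts to $M_{n-\ell}$. Reading $\chi_{M_n}(t)=(t-1)(t-2)^n$ off the chromatic polynomial $t(t-1)(t-2)^n$ of the underlying graph, and using $\sum_k\binom{n}{k}(t-1)(t-2)^k = (t-1)^{n+1}$ to collapse the Type~A contributions, the EPW recursion reduces to
\[
t^{n+1}P_n(t^{-1}) - P_n(t) = (t-1)^{n+1} + \sum_{\ell=1}^n \binom{n}{\ell}\bigl(2(t-1)\bigr)^\ell P_{n-\ell}(t).
\]

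Multiplying by $u^{n+1}$, summing over $n$, and applying $\sum_{\ell\ge 0}\binom{m+\ell}{\ell} y^\ell = (1-y)^{-(m+1)}$ converts this into the functional equation
\[
\Phi(t^{-1},tu) = \frac{(t-1)u}{1-(t-1)u} + \Phi\!\left(t,\tfrac{u}{1-2(t-1)u}\right),
\]
which, together with the degree bound, uniquely determines $\Phi(t,u)$. To finish part~(2), I would verify that the proposed closed form satisfies the equivalent quadratic identity
\[
(1-u+tu)\,\Phi(t,u)^2 - \Phi(t,u) + u = 0;
\]
extracting coefficients yields the short recursion $P_n = \sum_{k+j=n-1} P_kP_j + (t-1)\sum_{k+j=n-2}P_kP_j$ for $n\ge 2$, from which $\deg P_n\le\lfloor n/2\rfloor$ follows by a quick induction and consistency with the EPW functional equation can be checked.

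For part~(1), the same quadratic emerges naturally from the first-return decomposition $P=UP_1DP_2$ of Dyck paths weighted by long ascents: splitting the generating function according to whether $P_1$ is empty, begins with a short ascent, or begins with a long ascent, and observing that the passage $P_1\mapsto UP_1D$ increases the long-ascent count by exactly $1$ in the short-ascent case and by $0$ otherwise, yields a small linear system in three generating functions that collapses to the same identity $(1-u+tu)\Phi^2-\Phi+u=0$. The main obstacle is the third step: the EPW recursion looks nothing like the clean Dyck-path quadratic, so bridging the two requires the generating-function manipulation above (or an alternative argument directly reconciling the two shapes of recursion).
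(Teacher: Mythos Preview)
Your plan is correct and follows essentially the same route as the paper: split the flats of $M_n$ into the two types, obtain the recursion $t^{n+1}P_n(t^{-1})=(t-1)^{n+1}+\sum_i\binom{n}{i}2^{n-i}(t-1)^{n-i}P_i(t)$, convert it to the functional equation $\Phi(t^{-1},tu)=\frac{(t-1)u}{1-(t-1)u}+\Phi\bigl(t,\frac{u}{1-2(t-1)u}\bigr)$, and then check that the candidate closed form satisfies this together with the degree bound, concluding by uniqueness. The only differences are cosmetic: you derive the Dyck-path quadratic $(1-u+tu)\Phi^2-\Phi+u=0$ yourself via first-return decomposition (the paper simply cites it from \cite{Dyck}), whereas the step you leave as ``consistency with the EPW functional equation can be checked'' is exactly the computation the paper carries out in full---plugging $f(t,u)=\frac{1-\sqrt{1-4u(1-u+tu)}}{2(1-u+tu)}$ into the functional equation and simplifying.
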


\begin{remark}\label{rem:closed-form}
It is known that the number of Dyck paths of semilength $n$ with $k$ long ascents is equal to the quantity $\frac{1}{n+1} \binom{n+1}{k} \sum_{j=2k}^{n} \binom{j-k-1}{k-1} \binom{n+1-k}{n-j}$ (see \cite{Dyck} and sequence A091156 in \cite{oeis}). 
\end{remark}

\begin{remark}
The total number of Dyck paths of semilength $n$ is equal to the $n$-th Catalan number $\mathcal{C}_n = \frac{1}{n+1} \binom{2n}{n}$. Thus Theorem \ref{coeffs} implies that $P_n(1) = \mathcal{C}_n$ and Remark \ref{rem:closed-form} implies that the leading coefficient of $P_{2n}(t)$ is $\mathcal{C}_n$. Interestingly, $\mathcal{C}_n$ also appears as the leading coefficient of the \KL polynomial of the uniform matroid of rank $2n-1$ on $2n$ elements (see \cite{EPW} Appendix A and \cite{PWY}). 
\end{remark}

\begin{remark}
Prior to this paper, uniform matroids were the only infinite family of matroids for which the \KL polynomial has been computed. For example, it is still an open problem to compute the \KL polynomial of the braid matroid; see \cite{EPW} and \cite{ekl} for partial results.
\end{remark}



We conclude this section with a description of the structure of the paper. In Section \ref{background}, we recall the definition of the \KL polynomial of a matroid and review Dyck paths. Section \ref{sec:non-ekl} is dedicated to proving Theorem \ref{coeffs}.

In Section \ref{sec:ekl}, we recall the definition of the equivariant \KL polynomial of a matroid and explore the $S_n$ action on $M_n$ which allows us to make a conjecture for the $S_n$-equivariant \KL polynomial of $M_n$. This categorification of \KL coefficients was first considered for a uniform matroid of rank $n-1$ on $n$ elements by Proudfoot, Wakefield and Young \cite{PWY} where they were given by an irreducible representation of $S_n$. The equivariant \KL polynomial for a general matroid was subsequently defined by the author, Proudfoot and Young \cite{ekl} where we further studied uniform matroids in this context and computed the $S_n$-equivariant \KL polynomials of braid matroids of small rank.

\subsection*{Acknowledgements}
The author would like to thank Nicholas Proudfoot and Benjamin Young for many helpful discussions and suggestions. The success of this project is due in large part to the On-Line Encyclopedia of Integer Sequences \cite{oeis} and SageMath \cite{sagemath}. The author was partially supported by NSF grant DMS-1565036.

\section{Preliminaries}\label{background}

\subsection{\KL polynomials of a matroid}\label{sec:kl-background}
In this section we follow \cite{EPW} to define the non-equivariant \KL polynomial of a matroid (which we simply refer to as the \KL polynomial).

Let $M$ be a matroid on the finite ground set $E$. Denote by $L(M)$ the \textbf{lattice of flats} of $M$ and $\chi_M(t)$ the \textbf{characteristic polynomial} of $M$. The matroid $M^F$ is called the \textbf{contraction} of $M$ at $F$; it is the matroid on the ground set $E\setminus F$ whose lattice of flats is  $L^F:=\{ G\setminus F \mid G\in L(M) \text{ and } G\geq F\}$. The matroid $M_F$ is called the \textbf{localization} of $M$ at $F$ and is the matroid with ground set $F$ whose lattice of flats is $L_F := \{ G\in L(M) \mid G \leq F\}$. 
\bigskip

The \textbf{\KL polynomial} $P_M(t) \in \mathbb{Z}[t]$ is characterized by the following three properties \cite[Theorem 2.2]{EPW}. 
\begin{itemize}
\item If $\rk M=0$, then $P_M(t)=1$.\\
\item If $\rk M>0$, then $\deg P_M(t) < \frac 12 \rk M$.\\
\item For every $M$, $t^{\rk M}P_M(t^{-1}) = \displaystyle\sum_F \chi_{M_F}(t) P_{M^F}(t)$.
\end{itemize}



\subsection{Dyck paths}\label{Dyck-background}
A \textbf{Dyck path} of semilength $n$ is a lattice path in $\mathbb{N}^2$ beginning at $(0,0)$ and ending at $(2n,0)$ with up-steps of the form $u=(1,1)$ and down-steps of the form $d=(1,-1)$. Such a Dyck path may be expressed as a word $\alpha \in \{u,d\}^{2n}$.

\begin{figure}[h]
    \includegraphics[scale=1]{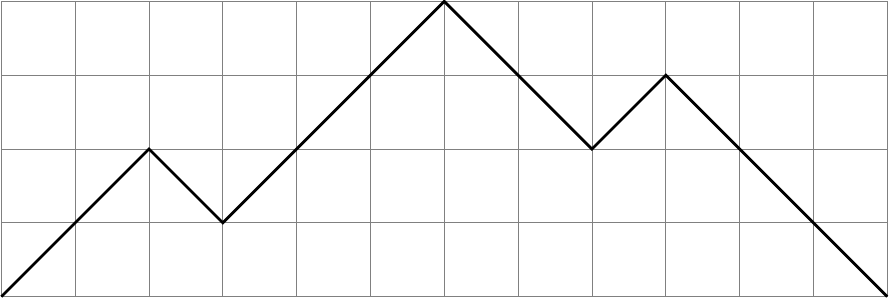}
    \caption{The Dyck path $uuduuudduddd$.}
    \label{fig:path}
\end{figure}%

\noindent
A \textbf{long ascent} of a Dyck path is an ascent of length at least 2. Equivalently, a long ascent of a Dyck path $\alpha$ is a maximal subword consisting of at least two consecutive $u$'s. The Dyck path given in Figure \ref{fig:path} has two long ascents.

Let $\D_n$ be the set of all Dyck paths of semilength $n$. We denote by $a_{n,k}$ the number of elements in $\D_n$ with exactly $k$ long ascents. As noted in \cite{Dyck}, $a_{n,k}$ is also the number of words $\alpha \in \D_n$ with $k$ occurrences of the subword $uud$. Additional interpretations of $a_{n,k}$ are known; see sequence A091156 in \cite{oeis}.

\section{Main results}\label{sec:non-ekl}

We begin this section with a description of the flats $F \in L(M_n)$ given by the underlying graph. Let $AB$ be the distinguished edge. For any $j \in \{1,\ldots, n\}$, we call the subgraph with edges $Aj$ and $ Bj$ a \textbf{spike}. 

If $\rk F = i$, then either
\begin{enumerate}
\item $F$ contains exactly one edge from $i$ distinct spikes, or
\item $F$ is the union of $i-1$ spikes and $AB$.
\end{enumerate}
For example, when $n=4$, a rank $2$ flat of the first type is given by $\{A1,B3\}$ and a rank $2$ flat of the second type is given by $\{AB,A4,B4\}$ (see Figure \ref{fig:thag}).

In the first case, the localization $(M_n)_F$ yields a Boolean matroid of rank $i$, and the contraction $M_n^F$ gives a matroid whose lattice of flats is isomorphic to that of $M_{n-i}$. In the second case, the localization $(M_n)_F$ gives a matroid whose lattice of flats is isomorphic to that of $M_{i-1}$, and the contraction $M_n^F$ is a Boolean matroid of rank $n-i+1$.


The characteristic polynomial of a rank $i$ Boolean matroid is equal to $(t-1)^i$. For thagomizer matroids, it is clear that $\chi_{M_i}(t) = (t-1)(t-2)^i$ by a simple deletion/contraction argument.

Recall that we've set 
\[
P_n(t) := P_{M_n}(t) \quad \text{and} \quad \Phi(t,u) := \sum_{n=0}^\infty P_{n}(t) u^{n+1}.
\]
We first turn our attention towards proving the following lemma.

\begin{lemma}\label{main}
We have the following (equivalent) equations.
\begin{enumerate}
\item For all $n$, $t^{n+1}P_n(t^{-1}) = (t-1)^{n+1} + \displaystyle\sum_{i=0}^n \binom{n}{i} 2^{n-i}(t-1)^{n-i}P_i(t)$.
\\
\item $\Phi(t^{-1},tu) = \dfrac{ut-u}{1+u-tu} + \Phi\left(t, \dfrac{u}{1+2u-2tu}\right)$.
\end{enumerate}
\end{lemma}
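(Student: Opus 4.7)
The plan is to prove statement (1) directly from the defining recursion of the \KL polynomial recalled in Section \ref{sec:kl-background}, then obtain (2) from (1) by a routine generating function manipulation.

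For (1), I would apply the identity $t^{\rk M}P_M(t^{-1}) = \sum_F \chi_{M_F}(t)P_{M^F}(t)$ with $M = M_n$ (where $\rk M = n+1$) and stratify the sum over $F\in L(M_n)$ according to the two types of flats described at the start of this section. A rank $i$ flat $F$ of type (i) is determined by a choice of $i$ spikes out of $n$ and one of the two edges from each, giving $\binom{n}{i}2^i$ such flats; its localization is Boolean of rank $i$, so $\chi_{(M_n)_F}(t) = (t-1)^i$, and its contraction has lattice of flats isomorphic to that of $M_{n-i}$, so $P_{M_n^F}(t) = P_{n-i}(t)$. A rank $i$ flat $F$ of type (ii) (with $1 \le i \le n+1$) is determined by a choice of $i-1$ spikes out of $n$, giving $\binom{n}{i-1}$ such flats; the formula $\chi_{M_i}(t) = (t-1)(t-2)^i$ noted just before the lemma gives $\chi_{(M_n)_F}(t) = (t-1)(t-2)^{i-1}$, while $M_n^F$ is Boolean of rank $n+1-i$, whose \KL polynomial is $1$ (an easy induction on the rank from the defining recursion, using $((t-1)+1)^r = t^r$). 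Summing over type (ii) flats and applying the binomial theorem collapses them to
\[
(t-1)\sum_{k=0}^{n}\binom{n}{k}(t-2)^{k} = (t-1)\bigl((t-2)+1\bigr)^{n} = (t-1)^{n+1},
\]
while the sum over type (i) flats, reindexed by $j = n - i$, yields exactly $\sum_{i=0}^n \binom{n}{i} 2^{n-i}(t-1)^{n-i} P_i(t)$. This is statement (1).

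To pass from (1) to (2), I would multiply both sides by $u^{n+1}$ and sum over $n \ge 0$. The left-hand side becomes $\sum_{n\ge 0} P_n(t^{-1})(tu)^{n+1} = \Phi(t^{-1},tu)$. The $(t-1)^{n+1}u^{n+1}$ term is a geometric series summing to $\frac{(t-1)u}{1-(t-1)u} = \frac{tu-u}{1+u-tu}$. For the convolution-type sum, I would swap the order of summation, set $j = n-i$, apply the identity $\sum_{j\ge 0}\binom{i+j}{j}x^{j} = (1-x)^{-(i+1)}$ with $x = 2(t-1)u$, and recognize the result as
\[
\sum_{i\ge 0} P_i(t)\left(\frac{u}{1-2(t-1)u}\right)^{i+1} = \Phi\!\left(t,\frac{u}{1+2u-2tu}\right).
\]

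The argument is essentially careful bookkeeping, and I do not expect a serious obstacle. The one nontrivial input from outside this section is the fact that Boolean matroids have \KL polynomial equal to $1$; the main points of care are making sure the types (i) and (ii) are disjoint and exhaust $L(M_n)$, and tracking the reindexing cleanly in both the matroid recursion and the generating function conversion.
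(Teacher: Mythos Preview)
Your proposal is correct and follows essentially the same route as the paper: you stratify the defining recursion over the two types of flats, use that Boolean matroids have \KL polynomial $1$ (the paper cites \cite[Corollary 2.10]{EPW} rather than sketching the induction), collapse the type-(ii) sum via the binomial theorem to $(t-1)^{n+1}$, and then pass to (2) by the same generating-function manipulation using $\sum_{j\ge 0}\binom{i+j}{j}x^{j}=(1-x)^{-(i+1)}$ with $x=2(t-1)u$. The only differences are cosmetic reindexings.
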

\begin{proof}
There are $\binom n{i} \cdot 2^{n-i}$ flats of the first type of rank $n-i$ and $\binom ni$ flats of the second type of rank $i+1$. Note that for any Boolean matroid $M,$ $P_M(t)=1$ \cite[Corollary 2.10]{EPW}. Then we have
\begin{align} \label{eqn:kl}
t^{n+1} P_n(t^{-1})& = \sum_{i=0}^n \binom{n}{i} \Big( 2^{n-i}(t-1)^{n-i} P_i(t) + (t-1)(t-2)^i \Big) \\
&=(t-1)^{n+1} + \displaystyle\sum_{i=0}^n \binom{n}{i} 2^{n-i}(t-1)^{n-i}P_i(t) \nonumber
\end{align}
which is the formula given in Lemma \ref{main}(1). Now our defining recursion tells us that
\begin{align*}
\Phi(t^{-1},tu) &= \sum_{n= 0}^\infty P_n(t^{-1}) t^{n+1} u^{n+1}\\
 &= \sum_{n= 0}^\infty (t-1)^{n+1}u^{n+1} + \sum_{n= 0}^\infty\sum_{i=0}^n \binom{n}{i} 2^{n-i}(t-1)^{n-i}P_i(t)u^{n+1}.
\end{align*}
We let $m=n-i$ which allows us to write the second summand as
\[
\sum_{i=0}^\infty P_i(t) u^{i+1} \sum_{m=0}^\infty 2^m \binom{m+i}{i} (t-1)^m u^m.
\]
Recall the identity
\[
\sum_{\ell=0}^\infty \binom{r+\ell}{r}x^\ell = \frac{1}{(1-x)^{r+1}} 
\]
and set $\ell =m$ and $x = 2u(t-1)$. This gives
\begin{align*}
\Phi(t^{-1},tu) &= u(t-1)\sum_{n= 0}^\infty (t-1)^{n}u^{n} + \sum_{i=0}^\infty  \frac{P_i(t) u^{i+1}}{(1-2u(t-1))^{i+1}}\\
&= \frac{u(t-1)}{1-u(t-1)} + \sum_{i=0}^\infty P_i(t)  \left( \frac{u}{1-2u(t-1)} \right)^{i+1}\\
&=  \dfrac{ut-u}{1+u-tu} + \Phi\left(t, \dfrac{u}{1+2u-2tu}\right).
\end{align*}
This completes the proof of Lemma \ref{main}. 
\end{proof}

Finally we are ready to prove Theorem \ref{coeffs}. Let $a_{n,k}$ be as in Section \ref{Dyck-background}, and set
\[
F(t,u) := \sum_{n,k\geq 0} a_{n,k} t^k u^n.
\]
It was shown in \cite[Section 1]{Dyck} that $F(t,u)$ satisfies
\[
u(1-u+tu)\cdot (F(t,u))^2 - F(t,u) + 1 = 0
\]
which gives
\[
F(t,u) = \frac{1 -\sqrt{1-4u(1-u+tu)}}{2u(1-u+tu)}.
\]
\textit{A priori}, this formula should have a $\pm$ sign. However, a plus sign would not give $F(t,u)$ as a formal power series. Hence we use a negative sign instead.

Let $f(t,u) := u\cdot F(t,u)$. Since we'd like to show that $\Phi(t,u) = u\cdot F(t,u)$, we first check that $f(t,u)$ satisfies the functional equation in Lemma \ref{main}(2).

We have
\[
f(t,u) = \frac{1-\sqrt{1-4u(1-u+tu)}}{2(1-u+tu)}
\]
and hence
\begin{align*}
f(t^{-1},tu) &= \frac{1-\sqrt{1-4tu(1-tu+u)}}{2(1-tu+u)}\\
&=\frac{ut-u}{1-tu+u} + \frac{1-2ut+2u-\sqrt{1-4tu(1-tu+u)}}{2(1-tu+u)}\\
&= \frac{ut-u}{1-tu+u} + \frac{1-\frac{1}{1+2u-2tu}\sqrt{1-4tu(1-tu+u)}}{\frac{2(1+u-tu)}{1+2u-2tu}}\\
&=\frac{ut-u}{1-tu+u} + \frac{1-\sqrt{1-\frac{4u\left( 1+2u-2tu-u+tu \right) }{(1+2u-2tu)^2} }}{\frac{2(1+u-tu)}{1+2u-2tu}}\\
&=\frac{ut-u}{1-tu+u}  + f\left( t, \frac{u}{1+2u-2tu} \right).
\end{align*}

Lastly, we note that both $c_{n,k}$ and $a_{n,k}$ are zero if $n > 2k$ and that $f(t,0) = \Phi(t,0) =1$. Then $f(t,u) = \Phi(t,u)$ which equivalently tells us that $c_{n,k} = a_{n,k}$. This completes the proof of Theorem \ref{coeffs}.\hfill \qed 

\section{The $S_n$ action}\label{sec:ekl}
Recall the notation set in Section \ref{sec:kl-background}. That is, let $M$ be a matroid on a finite ground set $E$. Given a flat $F\in L(M)$, let $M^F$ denote the contraction of $M$ at $F$ and let $M_F$ denote the localization of $M$ at $F$.

Let $W$ be a finite group acting on $E$ and preserving $M$.  We refer to the data $\{ M,E,W \}$ as an \textbf{equivariant matroid} $W \curvearrowright M$. For any $F,G\in L(M)$, let $W_F\subseteq W$ be the stabilizer of $F$ and let $W_{FG} := W_F\cap W_G$. Note that the action of $W$ on $M$ induces an action of $W_F$ on both $M_F$ and $M^F$. Let $\VRep(W)$ be the ring of isomorphism classes of virtual representations of $W$ and set
\[
\text{grVRep}(W) := \text{VRep}(W)\otimes_{\mathbb{Z}}\mathbb{Z}[t]. 
\]
Let $OS^W_{M,i} \in \text{Rep}(W)$ be the degree $i$ part of the Orlik-Solomon algebra of $M$. The \textbf{equivariant characteristic polynomial} of $M$, $H_M^W(t)$, is given by
\[
H^W_M(t) := \sum_{p=0}^{\rk M} (-1)^p t^{\rk M - p} \OS^W_{M,p} \in \grVRep(W).
\]
Note that the equivariant characteristic polynomial $H_M^W(t)$ is a categorified version of the usual characteristic polynomial $\chi_M(t)$. That is, we can recover $\chi_M(t)$ from $H_M^W(t)$ by taking the graded dimension.

The \textbf{equivariant \KL polynomial} of $W \curvearrowright M$, denoted $\P_M^W(t)$, is a categorified version of the \KL polynomial and is characterized by the following three properties \cite[Theorem 2.8]{ekl}.
\begin{itemize}
\item If $\rk M = 0$,  $\P^W_M(t)$ is equal to the trivial representation in degree 0.\\
\item If $\rk M > 0$,  $\deg \P^W_M(t) < \frac{1}{2}\rk M$.\\
\item For every $M$, $\displaystyle t^{\rk M} \P^W_M(t^{-1}) = \sum_{[F] \in L/W}\Ind_{W_F}^W\left(\OSc^{W_F}_{M_F}(t) \otimes \P^{W_F}_{M^F}(t)\right).$\\
\end{itemize}
The polynomial $\P_M^W(t)$ is an element of $\grVRep(W)$ and we can recover $P_M(t)$ from $\P_M^W(t)$ by taking the graded dimension.

Now we turn our attention back to the thagomizer matroid $M_n$. Though the full automorphism group of $M_n$ is $S_n\times S_2$ (unless $n=1$ in which case it is $S_3$), here we only consider the action of the symmetric group $S_n$. Let 
\[
\P_n(t) := \P^{S_n}_{M_n}(t) \quad \text{and} \quad \phi(t,u) := \sum_{u=0}^\infty \P_n(t)u^{n+1}.
\]
Let $\Upsilon_n$ be all partitions of $n$ of the form $[a, n-a-2i-{\eta}, 2^i, \eta]$ where $\eta \in \{0,1\}$, $i\geq 0$ and $1<a < n$. For any partition $\lambda$ of $n$, we let $V_\lambda$ be the irreducible representation of $S_n$ indexed by $\lambda$. 

For any partition $\lambda$, we set \[
\kappa(\lambda) = \left\{
\begin{array}{l l}
\lambda_1 - \lambda_2 +1 \quad & \lambda \neq [n-1,1]\\
\lambda_1-1  & \text{otherwise}
\end{array}
\right.
\] 
and
\[
\omega(\lambda) = \left\{
\begin{array}{l l}
1 \quad & \lambda_{\ell(\lambda)} \neq 1 \\
0 & \text{otherwise.}
\end{array}
\right.
\]

\begin{conjecture}\label{equiv} For all $n>0$, we have
$$\P_n(t) = \sum_{\lambda \in \Upsilon_n} \kappa(\lambda) V_\lambda t^{\ell(\lambda)-1} (t+1)^{\omega(\lambda)} + V_{[n]} ((n-1)t+1).$$

\end{conjecture}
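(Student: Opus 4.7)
The natural approach is induction on $n$ via the defining recursion for the equivariant \KL polynomial from Section \ref{sec:ekl}, with base cases verified by direct computation for small $n$.

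For the inductive step, I would first parameterize the $S_n$-orbits on $L(M_n)$ using the flat description from Section \ref{sec:non-ekl}. The orbit representatives fall into three families: type 1 rank $i$ flats with $j$ chosen $A$-edges and $i-j$ chosen $B$-edges, whose stabilizer is $S_j \times S_{i-j} \times S_{n-i}$; type 2 rank $i$ flats consisting of $i-1$ spikes together with the edge $AB$, with stabilizer $S_{i-1} \times S_{n-i+1}$; and the top flat $E$. For a type 1 representative, $(M_n)_F$ is a Boolean matroid $B_i$ with the evident Young-subgroup action on its ground set, and $M_n^F \cong M_{n-i}$ with the inherited $S_{n-i}$-action. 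For a type 2 representative, $(M_n)_F \cong M_{i-1}$ carries the $S_{i-1}$-action, and $M_n^F$ is Boolean of rank $n-i+1$ with $S_{n-i+1}$ permuting its edges.

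Next I would compute the equivariant OS polynomials appearing in the recursion. For a Boolean matroid $B_r$ on which a Young subgroup acts by permuting edges in several orbits, the degree $p$ part of the Orlik--Solomon algebra is the $p$-th exterior power of the corresponding permutation representation on $\mathbb{C}^r$, and its decomposition into irreducibles can be extracted via Cauchy's formula (or equivalently as a tensor product of exterior powers of permutation representations of smaller symmetric groups). For the thagomizer localization $(M_n)_F \cong M_{i-1}$, the equivariant OS polynomial under the $S_{i-1}$-action can be obtained by an equivariant deletion-contraction along $AB$, which recursively reduces to Boolean pieces.

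With these ingredients, substitute the conjectured formula for $\P_i(t)$ inductively (for $i < n$) into the right-hand side of the recursion; after applying $t \mapsto t^{-1}$ and multiplying by $t^{n+1}$, the goal is to match coefficients to recover the claimed expression. The main obstacle will be the representation-theoretic bookkeeping: decomposing the induced representations $\Ind_{S_j \times S_{i-j} \times S_{n-i}}^{S_n}(V_\mu \boxtimes V_\nu \boxtimes V_\rho)$ by Littlewood--Richardson, and then showing that the large telescoping sum collapses onto precisely the shape-restricted family $\Upsilon_n$, with the distinctive multiplicities $\kappa(\lambda)$ and the correcting factor $(t+1)^{\omega(\lambda)}$ depending on whether the last part of $\lambda$ is $1$. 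One alternative strategy — possibly cleaner — is to categorify the generating function functional equation of Lemma \ref{main}(2) by lifting it to an identity in a completion of $\bigoplus_n \grVRep(S_n)$ via the Frobenius characteristic map and plethystic substitution, in the spirit of the equivariant analysis of uniform matroids in \cite{ekl}; this may trade the case analysis above for a single symmetric-function identity but introduces its own combinatorial challenges.
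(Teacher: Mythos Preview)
The paper does not contain a proof of this statement: it is stated as a Conjecture, not a Theorem, and the surrounding text explains that it has only been verified computationally for $n \le 20$ via the symmetric-function recursion of Proposition~\ref{ekl-rec}. So there is no ``paper's own proof'' to compare against.

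Your outline follows exactly the route the paper sets up and then explicitly declines to complete. The paper derives the equivariant recursion (your decomposition of the $S_n$-orbits into type~1 and type~2 flats, with the stabilizers you list, is precisely what yields Proposition~\ref{ekl-rec}(1)), and the generating-function reformulation you propose as an ``alternative strategy'' is Proposition~\ref{ekl-rec}(2). The paper then points out, in the final remark, why neither version is tractable: the factor $w(t,u)^2$ forces iterated Pieri multiplications rather than a single one, and the equivariant characteristic polynomial of the thagomizer localization involves the plethysm $v_n(t) = s[n]\big[v_1(t)\big]$. These are exactly the steps you label ``the main obstacle'' and ``representation-theoretic bookkeeping'' without indicating how you would handle them.

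In short, your proposal is not a proof but a restatement of the framework the paper already provides, together with an acknowledgment of the same obstruction the paper identifies. If you intend to actually prove the conjecture, the content would have to be a concrete mechanism for collapsing the Littlewood--Richardson expansion onto the shape class $\Upsilon_n$ with the multiplicities $\kappa(\lambda)$ and the $(t+1)^{\omega(\lambda)}$ correction; nothing in the proposal suggests how that collapse occurs.
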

\begin{remark} 
We have checked this conjecture for thagomizer matroids of rank at most 20 using SageMath \cite{sagemath}. For our calculations, we worked in the symmetric function setting (see Proposition \ref{ekl-rec}).
\end{remark}

\begin{remark}
We know the coefficients of $\P_n(t)$ will be honest representations by \cite[Corollary 2.12]{ekl}  since $M_n$ is $S_n$-equivariantly realizable.
\end{remark}


\begin{remark}
Unlike the analogous statements for uniform matroids, Conjecture \ref{equiv} is less enlightening than Theorem \ref{coeffs}(1) (see \cite{ekl}, Theorem 3.1 and Remark 3.4). That is, the coefficients of the \KL polynomial of a uniform matroid are more cleanly expressed when given as the dimension of a certain representation of the symmetric group. This is not the case for thagomizer matroids.
\end{remark}

The remainder of this section is devoted to understanding the results that allow us to derive the recursive formula and functional equation for the Frobenius characteristic of $\mathcal{P}_n(t)$. Let
\[
W(t) := (t-1)\mathbb{C} \quad \text{and} \quad V(t):=(t-2)\mathbb{C}
\]
as virtual vector spaces. Then $W(t)^{\otimes r}$ is the equivariant characteristic polynomial of a rank $r$ Boolean matroid and $W(t)\otimes V(t)^{\otimes r}$ is the equivariant characteristic polynomial of $M_r$. Both $W(t)^{\otimes r}$ and $W(t)\otimes V(t)^{\otimes r}$ are virtual representations of $S_r$, where $S_r$ acts by permuting the factors of the graded tensor product. Note that the equivariant \KL polynomial of a Boolean matroid is the trivial representation in degree zero.

We'd like to categorify the recursive formula given in Lemma \ref{main}(1). Recall Equation \ref{eqn:kl}
\[t^{n+1} P_n(t^{-1}) =\sum_{i=0}^n \binom{n}{i}2^{n-i} (t-1)^{n-i} P_i(t) + \sum_{i=0}^n \binom{n}{i}(t-1)(t-2)^i.\]
The first sum is over flats of rank $n-i$ of the first type mentioned in Section \ref{sec:non-ekl}. For flats of this type, summing over $[F]\in L(M_n)/S_{n}$ gives
\setcounter{equation}{4}
\begin{equation}\label{eqn:ekl1}
\sum_{m+j+i=n} \Ind_{S_m \times S_j \times S_i}^{S_n} \left( W(t)^{\otimes m} \otimes W(t)^{\otimes j} \otimes \P_i(t)\right) \quad \in \grVRep(S_n)
\end{equation}
where $S_m$ permutes the vertices that are connected to $A$ by an edge in $F$, $S_j$ permutes the vertices that are connected to $B$ by an edge in $F$, and $S_i$ permutes the vertices that are not adjacent to any edge in $F$.
Similarly, summing over flats of the second type gives
\begin{equation}\label{eqn:ekl2}
\sum_{i=0}^n \Ind_{S_{i}\times S_{n-i}}^{S_n} \left( W(t)\otimes V(t)^{\otimes i} \right)\quad \in \grVRep(S_n)
\end{equation}
where $S_{n-i}$ is acting trivially. 

As in \cite[Section 3.1]{ekl}, we now translate to symmetric functions. We consider the \textbf{Frobenius characteristic}
\[
\ch : \grVRep(S_n) \overset{\sim}{\longrightarrow} \Lambda_n[t]
\]
where $\Lambda_n$ is the space of symmetric functions of degree $n$ in infinitely many formal variables $\{x_i\mid i \in \mathbb{N}\}$.

Let $s[\lambda] := \ch V_\lambda$ be the Schur function corresponding to $\lambda$ and set
\[
p_n(t) :=\ch \P_n(t), \quad w_n(t):= \ch W(t)^{\otimes n} \quad \text{and} \quad v_n(t) := \ch V(t)^{\otimes n}.
\]
Applying Frobenius characteristic to Equations \ref{eqn:ekl1} and \ref{eqn:ekl2}, we obtain 
\[
t^{n+1}p_n(t^{-1}) = (t-1)\displaystyle\sum_{\ell =0}^n v_\ell(t) s[n-\ell] + \sum_{i+j+m=n} p_i(t) w_j(t) w_m(t).
\]

Finally, we pass to generating functions, working in the ring $\Lambda[[t,u]]$ of formal power series in the variables $\{t,u,x_1, x_2,  \ldots\}$ that are symmetric in the $x$ variables.We let
\[
s(u)  := \sum_n s[n]u^n ,\quad w(t,u) := \sum_n w_n(t)u^n
\]
and
\[
v(t,u) := \sum_n v_n(t)u^n.
\]
Note that 
\[
w(t,u) = \frac{s(tu)}{s(u)}
\]
by \cite[Proposition 3.9]{ekl}.
The results of this section can be summarized in the following proposition.
\begin{proposition}\label{ekl-rec}
We have the following (equivalent) equations.
\begin{enumerate}
\item For $n >0,$ $t^{n+1}p_n(t^{-1}) = (t-1)\displaystyle\sum_{\ell =0}^n v_\ell(t) s[n-\ell] + \sum_{i+j+m=n} p_i(t) w_j(t) w_m(t)$.
\\
\item $\phi(t^{-1},tu) = (t-1)us(u)v(t,u) + w(t,u)^2 \phi(t,u)$. 
\end{enumerate}
\end{proposition}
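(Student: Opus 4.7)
The proof is essentially a change of bookkeeping: the discussion preceding the proposition has already assembled the equivariant recursion
\[
t^{n+1}\P_n(t^{-1}) = (\text{Equation }\ref{eqn:ekl1}) + (\text{Equation }\ref{eqn:ekl2}),
\]
so for (1) I would simply apply the Frobenius characteristic $\ch\colon\grVRep(S_n)\to\Lambda_n[t]$ term-by-term. The key fact is that $\ch$ sends induction from a Young subgroup $S_{n_1}\times\cdots\times S_{n_k}$ into the corresponding product in $\Lambda[t]$. Applied to Equation~\ref{eqn:ekl1}, the factors $\ch W(t)^{\otimes m}=w_m(t)$ and $\ch\P_i(t)=p_i(t)$ combine to give $\sum_{i+j+m=n}p_i(t)w_j(t)w_m(t)$. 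Applied to Equation~\ref{eqn:ekl2}, the $S_{n-i}$-factor acts trivially on $W(t)\otimes V(t)^{\otimes i}$, so it contributes the trivial representation with $\ch=s[n-i]$; pulling out the scalar $(t-1)$ from $W(t)$ then gives $(t-1)\sum_\ell v_\ell(t)s[n-\ell]$.

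For the equivalence (1)$\Leftrightarrow$(2), I would multiply (1) by $u^{n+1}$ and sum over $n\geq 0$. The left-hand side telescopes to $\sum_n p_n(t^{-1})(tu)^{n+1}=\phi(t^{-1},tu)$. For the right-hand side, a single Cauchy product (with one factor of $u$ left over) converts the Boolean sum into $(t-1)u\,s(u)v(t,u)$, and a triple Cauchy product converts the remaining sum into $w(t,u)^2\phi(t,u)$; here the shift $u^{n+1}$ is already absorbed by the definition $\phi(t,u)=\sum_n p_n(t)u^{n+1}$. Summing yields (2), and extracting the coefficient of $u^{n+1}$ recovers (1).

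There is no real obstacle; the only care required is tracking the offset-by-one indexing built into the generating functions $\phi$, $s$, $v$, and $w$, so that a factor of $u$ appears explicitly in the Boolean term but is absorbed by $\phi$ in the contraction term.
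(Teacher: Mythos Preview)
Your proposal is correct and follows exactly the approach of the paper: the proposition is stated as a summary of the preceding discussion, so the proof is precisely the term-by-term application of $\ch$ to Equations~\ref{eqn:ekl1} and~\ref{eqn:ekl2} (using that induction from a Young subgroup becomes a product in $\Lambda[t]$), followed by the routine passage to generating functions via Cauchy products. The only small remark is that the identity in (1) also holds for $n=0$ (both sides equal $t$), so your summation over $n\geq 0$ in deriving (2) is indeed justified.
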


\setcounter{theorem}{7}
\begin{remark}
In \cite{ekl}, we were able to compute the equivariant \KL polynomial for uniform matroids by showing that our ``guess'' satisfied a recursion analogous to the one found in Proposition \ref{ekl-rec}(2). That case was much simpler; we only had to consider singular applications of the Pieri rule. In this case, $w(t,u)^2$ requires multiple applications of the Pieri rule while $v_n(t) = s[n] \big[ v_1(t) \big]$ involves a plethysm. This makes proving Conjecture \ref{equiv} much more difficult.
\end{remark}


\bibliographystyle{amsalpha}

\bibliography{thag-bib}

\end{document}